\newtheorem{theorem}{Theorem}[section]
\newtheorem{lemma}[theorem]{Lemma}
\theoremstyle{definition}
\theoremstyle{remark}
\numberwithin{equation}{section}
\begin{document}

\title{On coefficients satisfying Chebyshev's approximation of $\pi(x)$}


\author{Connor Paul Wilson}
\address{530 Church Street
Ann Arbor, MI 48109}
\curraddr{Unit 7800 Box 339,
DPO, AP 96549}
\email{dpoae@umich.edu}
\thanks{}


\date{}

\dedicatory{I wish you were here Laura, descansa en el para\'iso ni\~na.}


\begin{abstract}
We note an interesting and under-expressed fact from Chebyshev's initial bounding for the prime counting function, $\pi(x) := \# \{p \leq x : p \text{ prime}\},$ based upon a selection of fixed coefficients $d\in D$ to show $\psi(x) \asymp x$, and thus the goal of choosing some $a(d)$ approximately the same as $\mu(d)$ such that:
    $$
    \sum_{d}\frac{a(d)}{d} = 0, \quad \wedge \quad -\sum_{d}\frac{a(d)\log d}{d} \approx 1.
    $$
\end{abstract}

\maketitle

\section{Introduction}

It is by Chebyshev~\cite{Chebyshev} that we take the following definitions, as
$$
\vartheta(x):=\sum_{p \leq x} \log p, \  \psi(x):=\sum_{1 \leq n \leq x} \Lambda(n)=\sum_{\substack{p^{m} \leq x \\ m \geq 1}} \log p,
$$
clearly for prime $p$, and $\Lambda(n)$ the von Mangoldt function.
Clearly, we have:
    $$
    \Lambda(n) = \sum_{d\mid n}\mu(d)\log\frac{n}{d}
    $$
by M\"obius inversion. Chebyshev took this simple fact and applied to it a general idea of relations such that for some:
    $$
    c(n) = \sum_{lm = n}a(l)b(m),
    $$
we can take a summatory function of $c(n)$ such that $C(x) = \sum_{n\leq x}c(n)$. We clearly thus also have $C(x) = \sum_{lm\leq x}a(l)b(m).$

Therefore,
    $$
    \psi(x) = \sum_{d\leq x}\mu(d)\sum_{n\leq\frac{x}{d}}\log n.
    $$

Note that we understand the summation term in the above equation as well, as if we take some $S(x) = \sum_{n\leq x}\log n,$ we know:
    $$
    S(x) = x\log x + O(\log x).
    $$
    
Thus it is here that we find Chebyshev's approach, essentially that he wishes to study $\psi(x)$ through approximations of the formulation using the M\"obius function. But by the same drawbacks that this approach gives us, we also take that producing bounds for any of these given functions of primes -- $\psi(x),$ $\pi(x),$ and $\vartheta(x)$ -- gives us a bound for any of them.

Now as did Chebyshev, let us aim to approximate $\psi(x)$ through $$ \psi(x) = \sum_{d\leq x}\mu(d)\sum_{n\leq\frac{x}{d}}\log n.$$ 
In order to do this, let us write a set of coefficients in place of the M\"obius function $\mu(d)$, $a(d)$. As did Chebyshev, let us fix our attention to a fixed, finite set of coefficients $D$ such that for $d\in D$ we have $a(d)\neq 0$.

Replacing $\mu(d)$ with $a(d),$ we obtain:
\begin{equation}
    \sum_{d\in D}a(d)S\left(\frac{x}{d}\right) = (x\log x-x)\sum_{d\in D}\frac{a(d)}{d} - x\sum_{d\in D}\frac{a(d)\log d}{d} + O(\log x).
\end{equation}

And thus our goal of approximating $\psi(x)$ using the method of Chebyshev becomes apparent: we wish to show $\psi(x) \asymp x$, and thus we have the goal of choosing some $a(d)$ approximately the same as $\mu(d),$ and thus such that:
    $$
    \sum_{d}\frac{a(d)}{d} = 0, \quad \wedge \quad -\sum_{d}\frac{a(d)\log d}{d} \approx 1.
    $$

\section{Chebyshev's initial bounds for $\pi(x)$}
Chebyshev stumbled upon the surprisingly accurate estimate initially in his research with $a(1) = 1$ and $a(2) = -2$, such that:
    $$
    \sum_{d}\frac{a(d)}{d} = 0, \quad \wedge \quad -\sum_{d}\frac{a(d)\log d}{d} = \log 2.
    $$
Let us derive an estimate from this selection of coefficients. Rewriting (1), we get:
\begin{equation}
    \begin{aligned}
    \sum_{d}a(d)S\left(\frac{x}{d}\right) &= \sum_{dn\leq x}a(d)\log n = \sum_{dn\leq x}a(d)\sum_{l\mid n}\Lambda(l) \\
    &= \sum_{dln\leq x}a(d)\Lambda(l) = \sum_{l\leq x}\left(\Lambda(l)\sum_{dn\leq \frac{x}{l}}a(d)\right).
    \end{aligned}
\end{equation}
It is clear that our estimate for $\psi(n)$ is closely related to the last summation $\sum_{dn\leq \frac{x}{l}}a(d) := E\left(\frac{x}{l}\right)$. Additionally, using $\sum_{d}\frac{a(d)}{d}$, we can extend the evident periodicity of $E\left(\frac{x}{l}\right)$ to:
    $$
    E(t) = \sum_{dn\leq t}a(d) = \sum_{d}a(d)\lfloor\frac{t}{d}\rfloor = -\sum_{d}a(d)\left\{\frac{t}{d}\right\},
    $$
giving us the fact that the expression must hold a period of $\operatorname{lcm}(d\in D)$.

Clearly, with our $D = \{1,2\}$ and our $a(d)$ conditions fixed as above, we obtain:
    $$
    E(t) = \begin{cases}
        0 & \text{ if } x= 0\leq t < 1\\ 
        1 & \text{ if } x= 1\leq t < 2,
    \end{cases}
    $$
which is the final piece of the puzzle needed to show an initial bound using Chebyshev's method.

We have:
    $$
    \psi(x)-\psi\left(\frac{x}{2}\right) = \sum_{\frac{x}{2}\leq l < x}\Lambda(l) \leq \sum_{l\leq x}\Lambda E\left(\frac{x}{l}\right) \leq \sum_{l\leq x}\Lambda(l) = \psi(x).
    $$
By (2), the central term can be expressed as:
    $$
    \sum_{l\leq x}\Lambda(l)E\left(\frac{x}{l}\right) = x(\log 2) + O(\log x),
    $$
which gives the bounds:
    $$
    \begin{aligned}
    \psi(x) - \psi\left(\frac{x}{2}\right) &\leq x(\log 2) + O(\log x) \\
    \psi(x) &\geq x(\log 2) + O(\log x),
    \end{aligned}    
    $$
where we obtain,
    $$
    \psi(x) = \sum_{0\leq r\leq (\log_{2}x) + 2}\left(\psi\left(\frac{x}{2^{r}}\right) - \psi\left(\frac{x}{2^{r+1}}\right)\right)\leq 2\log2x + O(\log^{2}x),
    $$
and thus,
    $$
    (\log2)x\leq \psi(x) + O(\log^{2}x)\leq (2\log2)x,
    $$
such that for sufficiently large $x$, we have
    $$
    (\log2)\frac{x}{\log x} < \pi(x) < (2\log2)\frac{x}{\log x}.
    $$

Chebyshev improved upon this method in his 1850 paper, using: 
    $$
    \begin{aligned}
    a(1) = a(30) &= 1 \\
    a(2) = a(3) = a(5) &= -1 \\
    a(d) &= 0, \forall \textup{ other } d,
    \end{aligned}
    $$
by the fact that,
    $$
    \frac{1}{1} - \frac{1}{2} - \frac{1}{3} - \frac{1}{5} + \frac{1}{30} = 0,
    $$
while
    $$
    \frac{\log2}{2} + \frac{\log3}{3} + \frac{\log5}{5} - \frac{\log30}{30} \approx 1.
    $$
to eventually obtain the famous:
    $$
    0.9212\frac{x}{\log x} < \pi(x) < 1.057\frac{x}{\log x}.
    $$
    
And it is here in which we find the titular coefficient selection mentioned in the abstract of this paper, that is to an incredible degree under-referenced in this field of study.
\section{Coefficients of approximation by Chebyshev}
It is clear that by the method of Chebyshev's approach, better selections of our $d\in D$ such that $a(d) \approx \mu(d)$ in formula (1.1). In our first approach to the problem of approximating $\pi(x)$ following Chebyshev's approach, we attain a relatively decent approximation using a set of only 2 coefficients, while his latter approximation is clearly much more accurate using 5.

We will state the following theorem about coefficients of Chebyshev's approach:

\begin{theorem}
Finding sets of numbers $a(d) := \{a_{1}, a_{2}, ... , a_{n}\}$ that provide an approximation of the prime counting function of the form:
$$
    \alpha\frac{x}{\log x} < \pi(x) < \beta\frac{x}{\log x},
$$
such that $\alpha, \beta \in \mathbb{R}$ using Chebyshev's method of approximation can be equated to finding our set $a(d)$ such that:
$$
    \sum_{k=1}^{n}\frac{1}{a_{k}} = 0
$$
and
$$
    \sum_{k=1}^{n}\frac{\log|a_{k}|}{k} \approx 1.
$$
\end{theorem}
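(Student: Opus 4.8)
The plan is to read the asserted equivalence as a dictionary between the analytic demand ``$\psi(x)\asymp x$'' and the two arithmetic conditions on the coefficients, with equation~(1.1) serving as the bridge in one direction and the periodicity of $E$ from Section~2 serving as the bridge in the other. Throughout I would use the identification $a_{k}\;\leftrightarrow\;\operatorname{sgn}\!\big(a(d_{k})\big)\,d_{k}$, so that $|a_{k}|=d_{k}\in D$ and $1/a_{k}=a(d_{k})/d_{k}$; this is the natural passage between the two notations because, as the examples of Section~2 make plain, the coefficient systems that work are exactly the ones imitating $\mu$, i.e.\ with each $d$ squarefree and $a(d)=\pm 1$, so that a finite coefficient system is literally the same datum as a finite set of signed integers $\{a_{1},\dots,a_{n}\}$.

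For the forward direction, suppose a finite system $a(d)$ yields $\alpha\,x/\log x<\pi(x)<\beta\,x/\log x$ by Chebyshev's method. By the transfer remark of the Introduction (Abel summation together with $\psi(x)=\vartheta(x)+O(\sqrt{x}\log x)$) this is the same as $\psi(x)\asymp x$. Now I would expand $\sum_{d}a(d)S(x/d)$ in the two ways already available: equation~(1.1) gives $(x\log x-x)\sum_{d}\tfrac{a(d)}{d}-x\sum_{d}\tfrac{a(d)\log d}{d}+O(\log x)$, while equation~(2.1) rewrites it as $\sum_{l\le x}\Lambda(l)E(x/l)$, which is $O(\psi(x))$ once $E$ is known to be bounded. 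Compatibility with $\psi(x)\asymp x$ forces the $x\log x$-term to cancel, i.e.\ $\sum_{d}\tfrac{a(d)}{d}=0$; the surviving linear term $-x\sum_{d}\tfrac{a(d)\log d}{d}$ must then be a positive multiple of $x$, normalized to $\approx x$ exactly as Chebyshev does in Section~2. Translating through $a_{k}=\operatorname{sgn}(a(d_{k}))\,d_{k}$ turns these into $\sum_{k}1/a_{k}=0$ and $-\sum_{k}\tfrac{\log|a_{k}|}{a_{k}}\approx 1$, which is the displayed pair in the statement (the overall sign being the one already fixed in Section~1).

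For the converse, I would start from a signed set $\{a_{1},\dots,a_{n}\}$ obeying the two conditions, put $D=\{|a_{k}|\}$ and $a(|a_{k}|)=\operatorname{sgn}(a_{k})$. The condition $\sum_{k}1/a_{k}=\sum_{d}a(d)/d=0$ is precisely what lets us replace $\sum_{d}a(d)\lfloor t/d\rfloor$ by $-\sum_{d}a(d)\{t/d\}$, so that $E(t)$ is bounded and periodic with period $\operatorname{lcm}(D)$, as in Section~2. Inserting this into equation~(2.1) and telescoping $\psi$ along $x,\ x/2,\ x/4,\dots$ — the same maneuver as in the $D=\{1,2\}$ model, now driven by the period-$\operatorname{lcm}(D)$ profile of $E$ rather than the $\{0,1\}$ profile — converts the value of $-\sum_{d}\tfrac{a(d)\log d}{d}$, which the second condition pins near $1$, into explicit $\alpha$ and $\beta$; one returns to $\pi(x)$ by the Introduction's transfer remark. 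Read in this way, ``find a Chebyshev approximation of $\pi$'' and ``find such an $\{a_{1},\dots,a_{n}\}$'' are the same problem.

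The step I expect to be the real obstacle is the telescoping in the converse. The sandwich $\psi(x)-\psi(x/c)\le\sum_{l\le x}\Lambda(l)E(x/l)\le(\mathrm{const})\,\psi(x)$ only produces a nontrivial two-sided bound when the periodic profile $E$ is nonnegative and, on an initial block of $l$'s, lies between the relevant indicator functions — a structural property of the $a_{k}$'s that Chebyshev's choices enjoy but that is recorded in neither displayed equation. So I would phrase the theorem as an equivalence of \emph{analytic content}: producing a Chebyshev-type bound reduces to the two arithmetic conditions, with ``$\approx 1$'' understood as the same soft normalization used throughout Section~2 and with the tacit side condition $E\ge 0$; pinning down that side condition, and quantifying how $\max_{t}E(t)$ and the support pattern of $E$ govern the gap $\beta-\alpha$, is where the genuine work sits.
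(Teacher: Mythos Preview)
Your approach is essentially the paper's own: the paper does not give a separate proof of this theorem at all, but treats it as a recapitulation of the development in Sections~1--2, namely equation~(1.1) to see that $\sum a(d)/d=0$ kills the $x\log x$ term and $-\sum a(d)\log d/d\approx 1$ supplies the linear one, together with the rewriting~(2.1) and the periodicity of $E$ to pass to $\psi$. Your write-up tracks exactly that logic, so there is nothing to contrast on method. Two remarks worth keeping: your dictionary $a_{k}=\operatorname{sgn}(a(d_{k}))\,d_{k}$ is a sensible repair of the paper's somewhat inconsistent notation in the theorem statement (where the displayed conditions do not literally match those of Section~1), and your closing caveat that the sandwich argument silently needs $E\ge 0$ on a period is a genuine point the paper never isolates---it is satisfied in both worked examples but is not implied by the two displayed conditions alone.
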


It is also worth noting the value of $n$ in $a(d),$ such that when a solution to the conditions presented above is found, it is imperative that the `degree' of the solution under Chebyshev's approach is included (such that $a_{n}(d)$ is represented as the solution.) It is clearly the case that solutions of higher degree will necessarily bound solutions of lower degree in their approximation value, given the impact that the number of coefficients has on the function $E(t)$ such that:

$$
    \psi(x)-\psi\left(\frac{x}{2}\right) = \sum_{\frac{x}{2}\leq l < x}\Lambda(l) \leq \sum_{l\leq x}\Lambda E\left(\frac{x}{l}\right) \leq \sum_{l\leq x}\Lambda(l) = \psi(x).
$$

However, although research into finding better sets of these coefficients is limited-to-none, it's worth noting that this is of cursory interest, but not necessarily of genuine importance, as we will see by Diamond and Erd\"os~\cite{Erdos} that there is an approach to the problem such that an $a_{n}(d)$ exists which for all intents and purposes maintains the property:
$$
\lim_{n\rightarrow\infty}a_{n}(d),
$$

providing the arbitrarily better bound:
$$
    \limsup_{x \rightarrow \infty}\left| \frac{\pi(x)}{x/\log(x)} - 1 \right| < \varepsilon
$$

\section{Diamond and Erd\"os}

Let us formally state the approach of the Diamond-Erd\"osian method, as the production of arbitrarily better bounds than Chebyshev such that:

\begin{theorem}
    For $\varepsilon > 0, \exists T = T(\varepsilon) \in \mathbb{Z^{+}},$ such that the M\"obius function with values on the interval $\mu:[1, T)$ gives:
    $$
    \limsup_{x \rightarrow \infty}\left| \frac{\pi(x)}{x/\log(x)} - 1 \right| < \varepsilon
    $$
\end{theorem}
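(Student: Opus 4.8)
The plan is to run the machinery of Section~2 essentially verbatim, but widening the window of coefficients from $\{1,2\}$ to $[1,T)$ and taking $a(d)=\mu(d)$ for every $1\le d<T$ (this is exactly the stabilization $a(d)\to\mu(d)$ anticipated in the Remark of Section~3; if one also wants $\sum_d a(d)/d=0$ to hold exactly one may append a single auxiliary coefficient at a point $\ge T$, which changes nothing below). The reason this choice is effective is a sharpening of the one fact that made the $D=\{1,2\}$ calculation go through. Writing
$$
E_T(t)=\sum_{dm\le t}a(d)=\sum_{n\le t}\;\sum_{\substack{e\mid n\\ e<T}}a(e),
$$
and using M\"obius inversion $\sum_{e\mid n}\mu(e)=[n=1]$, we note that every divisor of an integer $n<T$ is itself $<T$; hence the inner sum equals $[n=1]$ for all $n<T$, and therefore $E_T(t)=1$ throughout $1\le t<T$. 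Thus $E_T$ agrees with the indicator of $[1,\infty)$ on the whole range $[1,T)$ rather than merely on $[1,2)$, which is the generalization of the $E$-computation of Section~2.

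Next I would substitute this into the rearrangement~(2.1). Splitting the sum over $l$ at $l=x/T$, the terms with $1\le x/l<T$ contribute exactly $\sum_{x/T<l\le x}\Lambda(l)=\psi(x)-\psi(x/T)$, while by~(1.1) the full sum equals $A_T(x\log x-x)+B_T x+O(\log x)$ with $A_T=\sum_d a(d)/d$ and $B_T=-\sum_d a(d)\log d/d$. Therefore
$$
\psi(x)-\psi(x/T)=B_T x-R_T(x)+O(\log x),\qquad R_T(x):=\sum_{l\le x/T}\Lambda(l)E_T(x/l),
$$
where, when $A_T\ne0$, the term $A_T(x\log x-x)$ has been absorbed into $R_T(x)$ using $E_T(t)=tA_T-\sum_{d<T}\mu(d)\{t/d\}$ together with the elementary estimate $\sum_{l\le y}\Lambda(l)/l=\log y+O(1)$, leaving behind a contribution of size $O(|A_T|\,x\log T)$. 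Telescoping $x\mapsto x/T\mapsto x/T^2\mapsto\cdots$ over the $O(\log x)$ ranges with $x/T^r\ge1$, exactly as in Section~2, then sandwiches $\psi(x)$ between $B_T x-S_T(x)$ and $\tfrac{T}{T-1}B_T x+S_T(x)+O(\log^2 x)$, where $S_T(x)=\sum_r|R_T(x/T^r)|$, and partial summation converts this (again as in Section~2) into $\alpha_T\,x/\log x<\pi(x)<\beta_T\,x/\log x$ for all large $x$.

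The theorem thus reduces to showing, as $T\to\infty$, that $B_T\to1$ and that the per-$x$ density of the error, $c_T:=\limsup_{x\to\infty}\tfrac1x\big(S_T(x)+|A_T|\,x\log T\big)$, tends to $0$. The first, $B_T=-\sum_{d<T}\mu(d)\log d/d\to1$, is the identity $\sum_n\mu(n)\log n/n=-1$ (a standard consequence of the prime number theorem, and the corrective coefficient perturbs $B_T$ by only $O(|A_T|)$). The second is the substantive part: the non-flat tail of $E_T$ is a priori controlled only by the crude bound $|E_T(t)|\le\sum_{d<T}|\mu(d)|\{t/d\}\le T$, so to see that $R_T(x)$ is genuinely of smaller order after summation against $\Lambda$ one must either feed in a non-trivial input — a single application of $\psi(y)=y+o(y)$, i.e.\ the prime number theorem, which is how Diamond and Erd\"os secure the \emph{existence} of such coefficient sets and hence this theorem — or push a more delicate elementary estimate bootstrapping the Chebyshev bound $\psi(y)=O(y)$ back through $R_T$.

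I expect this tail bound — and in particular making it uniform in $x$ with $T$ held fixed while still forcing $c_T\to0$ as $T\to\infty$ — to be the main obstacle; everything else is Chebyshev's method of Section~2 applied over a wider window, and I would cite Diamond and Erd\"os~\cite{Erdos} for the sharp quantitative form of the estimates at each finite level $T$. Granting it, one has $\alpha_T\ge B_T-c_T\to1$ and $\beta_T\le\tfrac{T}{T-1}B_T+c_T\to1$, so for prescribed $\varepsilon>0$ one chooses $T=T(\varepsilon)$ with $\max(|\alpha_T-1|,|\beta_T-1|)<\varepsilon$ and obtains $\alpha_T\,x/\log x<\pi(x)<\beta_T\,x/\log x$ for all large $x$, which is precisely $\limsup_{x\to\infty}\bigl|\pi(x)/(x/\log x)-1\bigr|<\varepsilon$.
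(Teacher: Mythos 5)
Your setup is sound and genuinely generalizes Section 2: with $a(d)=\mu(d)$ for $d<T$ (plus a corrective coefficient so that $\sum_d a(d)/d=0$), the identity $E_T(t)=1$ for $1\le t<T$ is correct, the split of $\sum_{l\le x}\Lambda(l)E_T(x/l)$ at $l=x/T$ is legitimate, and your remainder $R_T(x)=\sum_{l\le x/T}\Lambda(l)E_T(x/l)$ is exactly the quantity the paper handles as the middle term $\sum_{i\le x/(T-1)}M_T(x/i)\Lambda(i)$ of its hyperbola-method decomposition. But there is a genuine gap at precisely the point where you say you ``expect the main obstacle'' and then proceed ``granting it'': the assertion that $c_T=\limsup_{x\to\infty}S_T(x)/x\to 0$ as $T\to\infty$ is not an auxiliary tail estimate to be outsourced --- it is the entire substance of the Diamond--Erd\"os theorem being proved. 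With only the crude bound $|E_T(t)|\le T$ you get $|R_T(x)|\le T\,\psi(x/T)=O(x)$ with an implied constant that does not shrink as $T$ grows, so your telescoped sandwich yields constants $\alpha_T,\beta_T$ bounded away from $1$ for every $T$; nothing in your argument produces the cancellation in $E_T$ on $[T,\infty)$ needed to do better, and citing \cite{Erdos} for ``the sharp quantitative form of the estimates at each finite level $T$'' amounts to assuming the theorem rather than proving it. (Your other limit, $B_T\to 1$, is likewise only quoted, though that one is at least a single standard fact, $\sum_n\mu(n)\log n/n=-1$.)

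For comparison, the paper supplies this missing content in two places. Lemma 4.2 establishes $-\sum_{j}\mu_T(j)\log j/j\to 1$ not by quoting the identity but via the average-value computation $s\int_1^\infty u^{-s-1}\,du=1$ leading to $1/(s\zeta(s+1))$ and a bound on $\int_1^n\sum_{i\le u}\frac{\mu(i)}{i}\frac{du}{u}$; and the hyperbola decomposition bounds the troublesome middle term by $O(x/K)$ using the smallness of $M_T(y)=\sum_{n\le y}m_T(n)$, which is where the prime number theorem (through $\sum_{n\le y}\mu(n)=o(y)$, combined with $\sum_d\mu_T(d)/d=0$) and Chebyshev's $\psi(y)=O(y)$ actually enter, before concluding $\psi(x)=x+O(x/T)+O(x/K)=x+O(\varepsilon x)$ and transferring to $\pi(x)$ by the partial-summation inequality at the start of the proof. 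If you want your route to close, you must prove the analogous statement for your $E_T$: for fixed $T$, uniformly in large $x$, $\sum_{l\le x/T}\Lambda(l)E_T(x/l)=o_T(1)\,x$ with $o_T(1)\to 0$ as $T\to\infty$, using inputs of that strength. As written, your reduction of the theorem to ``$B_T\to1$ and $c_T\to0$'' is correct bookkeeping, but the second, substantive limit is missing.
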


\begin{proof}
It is clear from our definition of $\psi$ that we can take:

$$
\limsup_{x \rightarrow \infty}\left|\frac{\psi(x)}{x}-1\right|
$$

as by the work of Chebyshev:

$$
\psi(x) = \sum_{p\leq x}\left\lfloor\frac{\log x}{\log p} \right \rfloor \log p\leq \sum_{p\leq x}\log x= \pi(x)\log x,
$$

such that,

$$
\pi(x) \leq \sum_{n\leq x} \frac{\Lambda(n)}{\log n} = \frac{\psi(x)}{\log x} + \int_{1}^{x}\frac{\psi(t)}{t\log^{2}t}\ dt \leq \frac{\psi(x)}{\log x} + \frac{Bx}{\log^{2}x},
$$

where our constant $B$ comes from Chebyshev's bound on $\psi(x) = O(x)$.

Following Diamond and Erd\"os, let us define over the strictly positive set of integers $T$:

$$
\mu_{T}(n) = \begin{cases}
\mu(n) & \text{ for } 1 \leq n <T, \\ 
-T\sum_{j<T}\frac{\mu(j)}{j} & \text{ for } n = T, \\ 
0 & \text{ for } n > T. 
\end{cases}
$$

In order to eventually show Theorem 4.1, we start with

\begin{lemma}
    $$
    \sum_{n\leq x} \Lambda \ast 1 \ast \mu_{T}(n) = \sum_{n\leq x} \Lambda \ast \mu_{T}(n)
    $$
approximates the statement
    $$
    \psi(x) = x
    $$
\end{lemma}

\begin{proof}
For some $\varepsilon > 0$, we take an unbounded sequence $T \in \mathbb{Z}^{+}$ such that:
    $$
    \left| \sum_{n\leq x} L \ast \mu_{T}(n) - x \right| < \varepsilon x, \text{ for } x(T)\leq x.
    $$
It is clear that for some $y \geq 1$ we have:
    $$
    \sum_{n\leq y}\log n = y\log y - y + O(\log ey),
    $$
and thus
    $$
    \begin{aligned}
    \sum_{n\leq x}L \ast \mu_{T}(n) &= \sum_{n\leq x} \sum_{ij = n}\log i\mu_{T}(j) = \sum_{ij\leq x}\log i\mu_{T}(j) \\
    &= \sum_{j\leq x}\left(\sum_{i\leq \frac{x}{j}}\log i\right)\mu_{T}(j) \\
    &= \sum_{j\leq x}\left(\frac{x}{j} (\log x -\log j - 1) + O\left(\frac{\log ex}{j}\right)\right)\mu_{T}(j) \\
    &= (x\log x - x)\sum_{j\leq x}\frac{\mu_{T}(j)}{j} - x\sum_{j\leq x}\frac{\log j}{j}\mu_{T}(j) \\
    &+ O((\log ex)\sum_{j\leq x}\left|\mu_{T}j\right|)
    \end{aligned}.
    $$
And clearly we have the value $O(T\log ex)$ for the $\varepsilon x$ in our lemma, so this just leaves us with the middle term.
Therefore,
    $$
    -\sum_{j\leq T}\frac{\log j}{j}\mu_{T}(j) = \sum_{j\leq T}\log\left(\frac{T}{j}\cdot\frac{\mu(j)}{j}\right),
    $$
such that,
    $$
    \int_{1}^{T}\log\frac{T}{u}\ d\sum_{i\leq u}\frac{\mu(i)}{i}.
    $$
We can take the average value of this integral by the fact that:
    $$
    s\int_{1}^{\infty}u^{-s-1}\ du = 1,
    $$
and therefore
    $$
    s\int_{1}^{\infty}u^{-s-1}\int_{1}^{u}\sum_{i\leq u}\frac{\mu(i)}{i}\frac{du}{u}\ du = \frac{1}{s\zeta(s+1)},
    $$
and thus we ensure our lemma by the bound of $\int_{1}^{n}\sum_{i\leq u}\frac{\mu(i)}{i}\frac{du}{u}$ for $n\in \mathbb{Z}.$
\end{proof}

In order to complete the proof of Theorem 4.1, let us use what Erd\"os called his ``hyperbolic method:"
    $$
    \sum_{n\leq x}\Lambda \ast m_{T}(n) = \sum_{ij\leq x}\Lambda(i)m_{T}(j),
    $$
for some $m_{T} := 1 \ast \mu_{T}$ such that $M_{T}(x) = \sum_{n\leq x}m_{T}(x)$.

Therefore,
    $$
    =\sum_{j\leq T-1}\psi\left(\frac{x}{j}\right)m_{T}(j) + \sum_{i\leq \frac{x}{T-1}}M_{T}\left(\frac{x}{i}\right)\Lambda(i) - M_{T}(T-1)\psi\left(\frac{x}{T-1}\right).
    $$

It is clear that we can simply use $\psi(x)$ to approximate the first term of this expression, and for the third $\psi(\frac{x}{T-1}) = O(\frac{x}{T})$ by the fact that $\mu_{T}(n) = \mu(n), \forall n < T$ gives $m_{T}(j) = 1 \ast \mu(j)$.
Thus all that is left is the middle term of the above expression. If we take some $K$ as a sufficiently large positive real number, then by Chebyshev's bound of $\psi(y) = O(y)$ and the prime number theorem we can take our estimate of $M_{T}(\frac{x}{i})$ such that:
    $$
    \left|\sum_{i\leq\frac{x}{T-1}}M_{T}\left(\frac{x}{i}\right)\Lambda(i)\right| \leq O\left(\frac{x}{K}\right) + \sum_{\frac{x}{TK}<i\leq\frac{x}{T-1}}\frac{T}{K}\Lambda(i),
    $$
which finally gives:
    $$
    = O\left(\frac{x}{K}\right),
    $$
allowing us to reduce the statement:
    $$
    \sum_{j\leq T-1}\psi\left(\frac{x}{j}\right)m_{T}(j) + \sum_{i\leq \frac{x}{T-1}}M_{T}\left(\frac{x}{i}\right)\Lambda(i) - M_{T}(T-1)\psi\left(\frac{x}{T-1}\right),
    $$
to
    $$
    \psi(x) + O\left(\frac{x}{T}\right) + O\left(\frac{x}{K}\right) = x + O(\varepsilon x),
    $$
for arbitrarily small $\varepsilon$.
\end{proof}

\vspace{0.5in}

\section{Acknowledgements}
The author would like to thank the Department of Mathematics at Brown University, as well as its associated NSF-funded partner \textit{Institute for Computational and Experimental Research in Mathematics} for its help in initially compiling related papers after the author's brief arXiv note on the topic~\cite{Wilson}; without this guidance it is very likely that an article in such form would have been composed, and the mathematics pertaining to the subject of this paper would remain largely bestrew. In particular, a Dr. David Lowry-Duda provided key insights into the contents of this paper, without which the author would be severely lacking.

The author would also like to pay his deepest respects and condolences to the family, friends, and loved ones of the late Laura L., who passed away tragically at the age of 16. The author also pays his condolences to the victims of adolescent suicide worldwide, particularly those from his hometown at W. T. Woodson High School, whose community has lost 7 young men and women to the same tragedy over the past 9 years. May they rest in peace, and let this paper serve to honor their memory.

\bibliographystyle{amsplain}

\end{document}